\documentclass{article}
\usepackage[left=1in, top=1in, right=1in, bottom=1in]{geometry}
\usepackage{amsmath,amssymb,amsthm}
\usepackage{enumerate}
\numberwithin{equation}{section}
\newtheorem{theorem}{Theorem}[section]
\newtheorem{lemma}{Lemma}[section]
\newtheorem{remark}{Remark}[section]
\newtheorem{propostion}{Propositoin}[section]
\newtheorem{corollary}{Corollary}[section]

\usepackage{graphicx} 
\usepackage{xcolor}

\usepackage{hyperref}
\usepackage{pgfplots}
\usepackage{pgfplotstable}
\pgfplotsset{compat=1.18}
\def\DimN{4}       

\definecolor{mult}{RGB}{190,55,45}
\definecolor{ours}{RGB}{35,100,185}
\definecolor{tanaka}{RGB}{20,140,85}

\pgfplotsset{
  regionaxis/.style={
    width=14cm,
    height=9cm,
    xmin=2,
    ymin=0,
    ymax=10,
    xlabel={$p$},
    ylabel={$\alpha$},
    axis lines=box,
    grid=major,
    grid style={gray!15},
    tick label style={font=\small},
    label style={font=\large},
    title style={font=\normalsize,align=center},
    scaled ticks=false,
    enlargelimits=false,
    clip=true,
    legend style={
      at={(0.5,-0.18)},
      anchor=north,
      draw=none,
      font=\small,
      cells={anchor=west}
    }
  }
}

\pgfplotstableread{
a       p
0       2.665908288
0.025   2.650934301
0.05    2.636525084
0.075   2.622650173
0.1     2.609281245
0.15    2.583957684
0.2     2.560364121
0.25    2.538334484
0.3     2.517723175
0.35    2.498402004
0.4     2.480257663
0.5     2.447108404
0.6     2.417594973
0.7     2.391171563
0.8     2.367395315
0.9     2.345903201
1       2.326394864
1.25    2.284754799
1.5     2.251087295
1.75    2.223395078
2       2.200287092
2.5     2.164099050
3       2.137257901
3.5     2.116718050
4       2.100603368
4.5     2.087699738
5       2.077189618
5.5     2.068504182
6       2.061236634
6.5     2.055089157
7       2.049839270
7.5     2.045317815
8       2.041394166
8.5     2.037966049
9       2.034952400
9.5     2.032288246
10      2.029920988
}\OurBoundary

\pgfplotstableread{
a             p
0             2.261815732
0.015008757   2.246033943
0.030017513   2.230588126
0.045026270   2.215467973
0.060035026   2.200663590
0.075043783   2.186165483
0.090052539   2.171964531
0.105061296   2.158051976
0.120070052   2.144419395
0.135078809   2.131058691
0.150087565   2.117962075
0.165096322   2.105122049
0.180105078   2.092531398
0.195113835   2.080183168
0.210122591   2.068070664
0.225131348   2.056187429
0.240140104   2.044527240
0.255148861   2.033084094
0.270157617   2.021852200
0.285166374   2.010825967
0.300175130   2.000000000
}\TanakaBoundary

\newcommand{\DrawUniquenessOneD}{%
  \addplot[
    draw=none,
    fill=ours!20,
    forget plot
  ] table[x=p,y=a] {\OurBoundary}
    -- (axis cs:2,10)
    -- (axis cs:2,0)
    -- cycle;

  \addplot[
    draw=none,
    fill=tanaka!40,
    forget plot
  ] table[x=p,y=a] {\TanakaBoundary}
    -- (axis cs:2,0)
    -- cycle;

  \addplot[
    ours,very thick,no marks,forget plot
  ] table[x=p,y=a] {\OurBoundary};

  \addplot[
    tanaka!80!black,very thick,no marks,forget plot
  ] table[x=p,y=a] {\TanakaBoundary};
}

\title{Sufficient conditions for multiplicity and uniqueness of positive solutions of the H\'enon equation}
\date{}
\author{ Geyang Du\\
\small gydu@jiangnan.edu.cn\\
\small School of mathematics and data science,Jiangnan University, Wuxi, People’s
Republic of China}
\begin{document}
\maketitle
\section*{Abstract}
We establish explicit sufficient conditions for multiplicity of positive solutions of the Hénon equation in the unit ball
and for uniqueness in one dimension. In the Sobolev-subcritical range, a spherical harmonic function argument shows that the
radial solution has Morse index greater than one and therefore differs from a ground state. In even dimensions,
a degree-four perturbation in a symmetry and monotonicity cone yields a sufficient condition for nonradiality in
the supercritical range. The one-dimensional uniqueness criterion follows from a comparison of weighted Dirichlet
eigenvalues and a concavity estimate.

\section{Introduction and main results}
We study the following H\'enon equation:
\begin{equation}\label{Henon}
\left\{\begin{aligned}
-\Delta u&=|x|^{\alpha} u^{p-1},&&\text{in}\ B_1(0),\\
u&>0,&& \text{in}\ B_1(0),\\
u&=0,&& \text{on}\ \partial B_1(0),
\end{aligned}   \right. 
\end{equation}
where $B_1(0)\subset \mathbb{R}^N,\alpha>0,2<p<p_\alpha,p_\alpha=
\infty$ when $N=1,2$, $p_\alpha=\frac{2(N+\alpha)}{N-2}$ when $N\ge 3$.

H\'enon equation was intrduced in \cite{henon-AA1973} when astrophysicist Michel H\'enon was studying spherical stellar systems. Later, Ni proved the existence of radial solutions when $2<p<p_\alpha$ in \cite{Ni-Indiana1982}. The uniqueness of the radial solution was proved in \cite{Ni-Nussbaum-CPAM1985}, see also \cite{Gladiali-Grossi-Neves-Adv2013,Figueroa-Neves-Sergio-ANS2019}. Through Pohozaev identity, we know there is no solution when $p\ge p_\alpha$ \cite{Nagasaki-JFSUTS1989}. 

The interesting part of H\'enon equation is that the numerical results suggest that it may have multiple solutions even though the domain $B_1(0)$ and operator $\Delta$ are sphercially invariant, see \cite{chen-zhou-ni-IJBCASE2000}. Because of the uniqueness of radial solution, multiplicity implies existence of nonradial solutions. This phenomena is called symmetry breaking, see \cite{Smoller-Wasserman-ARMA1986,Smoller-Wasserman-CMP1986,Smoller-Wasserman-Invent1990}. So H\'enon equation is very different from the famous Lane-Emden equation when $\alpha=0$, which has only radial solution by using the famous moving plane method \cite{Gidas-Ni-Nirenberg-cmp1979}.

We'll introduce some state-of-art literature about the multiplicity and uniqueness results which are highly related with our paper.

We first concentrate on the case $N=1$. Tanaka proved when $(p-1)2^{p-1}[B(\alpha+1,p+1)]^{-1}\le \pi^2$, the one-dimensional H\'enon equation has a unique positive solution which is even, while it has at least three solutions when $(p-2)\alpha\ge 4$.

When $N\ge2$, roughly speaking, uniqueness holds when $p-2,\alpha$ are small while multiplicity holds when $p-2,\alpha$ are large.

We first list some uniqueness results. Let $2^*=\frac{2N}{N-2}$ when $N\ge 3$, $2^*=\infty$ when $N=1,2$. Kajikiya proved when $2<p<2^*,\alpha$ is small, the ground state solution of H\'enon equation is unique, see \cite{Kajikiya-JDE2012}. When $p$ is close to $2$, Amadori and Gladiali proved the uniqueness in \cite{Amadori-Gladiali-ADE2014}.

As for multiplcity, the situation is a bit complicated. We seperate the results into subcritical $2<p<2^*$, critical $p=2^*$ and supercritical case $2^*<p<p_\alpha$. The first subcritical result was obtained in \cite{Smets-Willem-Su-ccm2002}. They compared the asymptotic behavior of the energy in the space $H_0^1(B_1(0))$ and radial functions space $H_R$, and proved the multiplicity when $2<p<2^*$ and $\alpha$ is large. Later, Serra extend their results into critical case $p=2^*$ when $N\ge 4$ and $\alpha$ is large, see \cite{Serra-CVPDE2005}. Yan and Wei remove the '$\alpha$ large' condition, and they also prove there are infinite many solutions when $p=2^*, N\ge 4$, see \cite{wei-yan-RMI2013}. The case $N=3$ was proved in \cite{Hao-Chen-Zhang-JDE2015}. Amadori and Gladali obtained the multiplicity results for $\alpha>0, N=2$ when $p$ is large in \cite{Amadori-Gladiali-JDE2020}. Recently, Liu and Luo construct infinite nonradial sign-changing solutions in the critical case\cite{Liu-Luo-MA2026}. 

As for the supercritical case, the results are incomplete. We list some progress toward this direction. Badiale and Serra construct some nonradial solutions when $p<p_N=2\frac{\lceil N/2 \rceil+1}{\lceil N/2\rceil-1}$ and $\alpha$ is large. They prove this by studying the asymptotic behavior of the energy in the double resolution space $H_{m,n}$ and $H_R$, see \cite{Badiale-Serra-ANS2004}. Figueroa and Neves proved the existence of non-raidal solutions when $\alpha$ is close to even number and $p$ is colse to $p_\alpha$, see \cite{Figueroa-Neves-Sergio-ANS2019}. Lately, Cowan and Moameni obtain the multiplicity results when $\frac{16(N+2)}{(N-2)^2}+2=\tilde p_N<p<p_\alpha$ and $N$ is even \cite{Cowan-Moameni-MA2024}.
It's not difficult to see that when $N$ is even, $p_N<\tilde p_N$. So we still don't know whether H\'enon equation has multiple solutions in the whole supercritical case.

Bifurcation results and the calculation of morse index of the H\'enon equation could be found in \cite{Amadori-CPAA2020,Amadori-DeMarchis-Ianni-NA2022,Amadori-Gladiali-NARWA2020,Amadori-Gladiali-Nonlinearity2020} and references therein.

In this article, we will prove the multiplicity results under certain conditions in the whole supercritical case. In addition, We notice, the seperation about the uniqueness and multiplicity region in $N\ge2$ are obtained numerically in \cite{Smets-Willem-Su-ccm2002}. We will provide some analytic evidence. Here are our main results.

\begin{theorem}[Multiplicty]\label{mainresultmultiplicity}
H\'enon equation \eqref{Henon} has at least two solutions when
\begin{itemize}
    \item $(p-2)(\alpha+N-2)\ge 4,p<2^*,N\ge 2$,
    \item $(p-2)(\alpha+N-8)\ge 16,2^*<p<p_\alpha,N\ge 3$ and $N$ is even.
\end{itemize} 
\end{theorem}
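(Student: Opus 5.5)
The plan has two parts, matching the two items of Theorem \ref{mainresultmultiplicity}. In both, the unique radial solution $u_r$ (unique by \cite{Ni-Nussbaum-CPAM1985}) is compared with a solution obtained variationally in a class that contains nonradial functions, and we show that $u_r$ cannot be that solution.

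\textbf{Subcritical case.} For $2<p<2^*$ a ground state $u_g$ exists as a minimizer of the Rayleigh-type quotient
\[
\|\nabla u\|_2^2\Big/\Big(\int |x|^\alpha |u|^p\Big)^{2/p}
\]
on $H_0^1(B_1)$. A minimizer has Morse index at most one for the linearized operator
\[
L=-\Delta-(p-1)|x|^\alpha u^{p-2}.
\]
So it suffices to show that $u_r$ has Morse index at least two. Testing $L$ on $u_r$ itself gives
\[
\langle Lu_r,u_r\rangle=-(p-2)\int |x|^\alpha u_r^p<0,
\]
which is one negative direction. For a second, orthogonal one, take $\phi=\psi(|x|)\,Y_1(x/|x|)$ with $Y_1$ a first spherical harmonic, eigenvalue $N-1$; orthogonality to radial functions is automatic. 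I would choose $\psi(r)=r\,u_r'(r)$ or a power-weighted variant $r^{\beta}|u_r'|$. Differentiating the radial ODE gives an identity for $u_r'$:
\[
-(u_r')''-\frac{N-1}{r}(u_r')'+\frac{N-1}{r^2}u_r'=(p-1)r^\alpha u_r^{p-2}u_r'+\alpha r^{\alpha-1}u_r^{p-1}.
\]
The $\frac{N-1}{r^2}$ term is exactly the first spherical harmonic contribution, so $\psi=u_r'$ almost solves $L\phi=0$. The extra term $\alpha r^{\alpha-1}u_r^{p-1}$, together with the freedom in the power $\beta$, then makes the quadratic form negative. Integrating by parts and using a Pohozaev-type identity for $\int r^{\alpha}u_r^p$ versus $\int|u_r'|^2$, I expect the sign condition to reduce to $(p-2)(\alpha+N-2)\ge4$. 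Hence the Morse index of $u_r$ is at least $2$, so $u_g\ne u_r$ and $u_g$ is nonradial; as a rotation-type image of a nonradial solution, it gives a second solution.

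\textbf{Supercritical, $N$ even.} Here $H_0^1$ minimization fails, so I would follow the Cowan--Moameni approach. Write $\mathbb{R}^N=\mathbb{C}^{N/2}$, or split $x=(y,z)$ with $y,z\in\mathbb{R}^{N/2}$, and work in a closed convex cone $K$ of functions invariant under $O(N/2)\times O(N/2)$ that are monotone in the radial variables $|y|,|z|$. In $K$ the monotonicity gives an $L^\infty$ bound away from the origin, and since the effective dimension is lower, compactness is restored beyond $2^*$. A variational principle on the cone (mountain pass or minimization, with invariance of $K$ under the solution map, Moameni's framework) yields a solution $u_K\in K$ that minimizes energy within $K$. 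It remains to show $u_K\ne u_r$. For this, perturb $u_r$ along a direction in $K$ which is a degree-four harmonic in the angular variable, for instance
\[
\psi(r)\,\bigl(|y|^4-c|y|^2|z|^2+|z|^4\bigr)/r^4 ,
\]
chosen to be harmonic and invariant. Degree two harmonics do not respect the monotonicity, which is why degree four is needed. The degree-four spherical eigenvalue is $4(N+2)$. Repeating the computation of the first part with $u_r'$-based test functions, with the eigenvalue $4(N+2)$ replacing $N-1$, gives a negative second variation on $K$ under $(p-2)(\alpha+N-8)\ge16$. Therefore $u_r$ is not a $K$-minimizer, and $u_K$ is nonradial.

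\textbf{Main obstacle.} The hardest step is the explicit quadratic form estimate: choosing the weight $\beta$ and the test direction so that, after integration by parts, every remainder term has a definite sign and the threshold comes out exactly as stated. A secondary difficulty is verifying that the degree-four perturbation stays inside the monotonicity cone near $u_r$. I would handle this by using the strict monotonicity $u_r'<0$ to absorb a small perturbation, with a cutoff near $r=0$.
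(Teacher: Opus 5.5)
Your architecture matches the paper's. In the subcritical range you compare the Morse index of the $H_0^1$ ground state (exactly one) with that of $u_R$ (your $u_r$). In the supercritical even-dimensional range you show $u_R$ cannot minimize the Rayleigh quotient on the Cowan--Moameni cone by perturbing along a degree-four harmonic. The gap is the step that carries all the content: proving $\Lambda_{1,1}<0$ and $\Lambda_{1,4}<0$ under exactly $(p-2)(\alpha+N-2)\ge4$ and $(p-2)(\alpha+N-8)\ge16$. You only say you ``expect'' the threshold, and the test functions you propose cannot produce it.

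By Hopf's lemma $u_R'(1)<0$. So $\psi(|x|)Y_1$ with $\psi=ru_R'$ or $r^\beta|u_R'|$ does not vanish on $\partial B_1(0)$ and is not admissible in the Rayleigh quotient on $H_0^1(B_1(0))$. That $u_R'$ ``almost'' solves $\mathcal{L}_1\psi=0$ says exactly that $\partial_{x_i}u_R$ solves the linearized equation but violates the Dirichlet condition. A cutoff $\eta$ near $r=1$ adds the positive term $\int_0^1(\eta')^2(u_R')^2r^{N-1}dr$, with no small parameter to control it. For $\alpha=0$ one even has $\mathcal{L}_1(u_R')=0$, yet the subcritical Lane--Emden solution has Morse index one, so this direction gives nothing for free.

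For $k=4$ it is worse: $\mathcal{L}_4(-u_R')=-\alpha r^{\alpha-1}u_R^{p-1}+\frac{\mu_4-\mu_1}{r^2}|u_R'|$ is positive near $r=1$, so ``repeating the computation'' does not work. Moreover, $\tilde u+s\Phi\ge0$ near $\partial B_1(0)$ needs $|\Phi|\le C\tilde u$, which $u_R'$-based directions violate.

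The missing idea is the test function $z_k=r^ku_R^{p/2}$, which vanishes at $r=0$ and $r=1$ and is dominated by $u_R$. A direct computation gives
\[
\mathcal{L}_k(z_k)=-r^{k-1}u_R^{\frac p2-2}\Big[\big(\tfrac p2-1\big)r^{1-N}G(r)+\big(kp-(N+\alpha)(\tfrac p2-1)\big)u_Ru_R'\Big],
\]
where $G(r)=r^{N-1}\big[\frac p2r(u_R')^2+r^{\alpha+1}u_R^p+(N+\alpha)u_Ru_R'\big]$ is a Pohozaev-type quantity. It satisfies $G(0)=0$ and $G'(r)=r^{N-1}(u_R')^2\big[N+\alpha-\frac p2(N-2)\big]>0$ for $p<p_\alpha$, hence $G>0$. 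Since $u_R'<0$, $\mathcal{L}_k(z_k)<0$ pointwise once $kp\le(N+\alpha)(\frac p2-1)$, i.e.\ $(p-2)(\alpha+N-2k)\ge4k$. Taking $k=1$ and $k=4$ gives the two hypotheses.

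Also, the paper's cone is monotone in the angle $\theta=\arctan(|z|/|y|)$, not in $|y|$ and $|z|$. Since $\tilde u$ is radial and $Y_4=-\cos4\theta+\frac{2-N}{2+N}$, one has $\partial_\theta(\tilde u+sz_4Y_4)=4sz_4\sin4\theta\ge0$ on $(0,\pi/4)$. So no absorption via $u_R'<0$ and no cutoff at the origin is needed, only the right sign of the harmonic. Your quartic is decreasing in $\theta$ there, so you would take $s<0$.
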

By theorem \ref{mainresultmultiplicity}, we immediately obtain the following corrollary.
\begin{corollary}\label{wholesupercritical}
When $\alpha\ge 3N, N\ge 3, N$ is even, the H\'enon equation \eqref{Henon} has at least two solutions for any $2^*<p<p_\alpha$.    
\end{corollary}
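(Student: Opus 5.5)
The plan is to apply the second bullet of Theorem \ref{mainresultmultiplicity} directly. Since $N$ is even and $N\ge 3$, the parity and dimension hypotheses hold automatically. Any $p$ with $2^*<p<p_\alpha$ also lies in the range where that bullet applies. So the only thing to check is the inequality $(p-2)(\alpha+N-8)\ge 16$ for every such $p$, given $\alpha\ge 3N$.

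Because $\alpha\ge 3N$, we have $\alpha+N-8\ge 4N-8=4(N-2)>0$. Hence the left-hand side is increasing in both $p$ and $\alpha$. It is therefore enough to bound it below using the smallest admissible value $p>2^*$. Here
$$p-2>2^*-2=\frac{2N}{N-2}-2=\frac{4}{N-2},$$
which gives
$$(p-2)(\alpha+N-8)>\frac{4}{N-2}\cdot 4(N-2)=16 .$$
This is exactly the required condition, even with strict inequality. The second bullet of Theorem \ref{mainresultmultiplicity} then yields at least two solutions for every $2^*<p<p_\alpha$.

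One point to confirm is that the range is nonempty. This holds because $p_\alpha=\frac{2(N+\alpha)}{N-2}>\frac{2N}{N-2}=2^*$ whenever $\alpha>0$. There is no real obstacle: the whole argument is the elementary computation $(2^*-2)\cdot 4(N-2)=16$, together with monotonicity in $p$ and $\alpha$.
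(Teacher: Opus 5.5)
Your proposal is correct and matches the paper's approach: the paper derives the corollary directly from the second bullet of Theorem \ref{mainresultmultiplicity}. Your estimate $(p-2)(\alpha+N-8)>\frac{4}{N-2}\cdot 4(N-2)=16$ is exactly the check the paper leaves implicit, and $\alpha\ge 3N$ is precisely the threshold at which this holds uniformly as $p\downarrow 2^*$.
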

As far as we know, this is the first result about exsitence of the symmetry-breaking solutions for the whole supercritical range. What's more, multiplicity region in \cite{Cowan-Moameni-MA2024} is contained in ours. In fact, when $\tilde p_N<p<p_\alpha$, $\alpha>\frac{p}{2}(N-2)-N>8\frac{N+2}{N-2}-2$. So $(p-2)(\alpha+N-8)>16\frac{N+2}{(N-2)^2}\left(\frac{32}{N-2}+N-2\right)>16$. Our second results are about the uniqueness for one-dimensional H\'enon equation.

\definecolor{supermult}{RGB}{165,85,185}

\pgfmathsetmacro{\pcritical}{2+4/(\DimN-2)}
\pgfmathsetmacro{\pCross}{2+16/(\DimN-2)}

\pgfmathsetmacro{\pmaxTwo}{2+20/(\DimN-2)}
\pgfmathsetmacro{\alphaMaxTwo}{max(10,3*\DimN+1)}

\pgfmathsetmacro{\pentryTwo}
  {2+4/(\alphaMaxTwo+\DimN-2)}
\pgfmathsetmacro{\pAtTop}
  {2+2*(\alphaMaxTwo+2)/(\DimN-2)}

\pgfmathsetmacro{\subRegionP}{2+2/(\DimN-2)}
\pgfmathsetmacro{\subRegionA}{0.83*\alphaMaxTwo}

\pgfmathsetmacro{\subLabelP}{2+1.3/(\DimN-2)}
\pgfmathsetmacro{\subLabelA}
  {4/(\subLabelP-2)-(\DimN-2)}

\pgfmathsetmacro{\superRegionP}{2+13/(\DimN-2)}
\pgfmathsetmacro{\superRegionA}{0.91*\alphaMaxTwo}

\pgfmathsetmacro{\superLabelP}{2+10/(\DimN-2)}
\pgfmathsetmacro{\superLabelA}
  {8-\DimN+16/(\superLabelP-2)}

\pgfmathsetmacro{\weightedLabelP}{1+18/(\DimN-2)}
\pgfmathsetmacro{\weightedLabelA}{6.5}

\pgfmathsetmacro{\uniqueLabelP}{2+3/(\DimN-2)}
\pgfmathsetmacro{\uniqueLabelA}{0.53*\alphaMaxTwo}

\tikzset{
  curve label/.style={
    font=\tiny,
    fill=none,
    draw=none,
    inner sep=1.5pt
  },
  region label/.style={
    font=\scriptsize,
    align=center,
    fill=none,
    draw=none,
    inner sep=1pt
  }
}

\begin{figure}[htbp]\label{figure}
\centering

\begin{minipage}[t]{0.49\textwidth}
\vspace{0pt}
\centering
\begin{tikzpicture}
\begin{axis}[
  regionaxis,
  width=\linewidth,
  height=6.7cm,
  xmax=6,
  ymax=10,
  xtick={2,3,4,5,6},
  ytick={0,2,4,6,8,10},
  tick label style={font=\scriptsize},
  label style={font=\small},
  title={ 1 Dimension},
  title style={font=\small,align=center}
]

\addplot[
  draw=none,
  fill=mult!20,
  domain=2.4:6,
  samples=220,
  forget plot
] {4/(x-2)}
  -- (axis cs:6,10)
  -- cycle;

\DrawUniquenessOneD

\addplot[
  mult,very thick,
  domain=2.4:6,
  samples=220,
  forget plot
] {4/(x-2)};

\node[
  region label,
  text=mult!80!black
] at (axis cs:4.65,5.0)
  {Multiplicity\\region};

\node[
  curve label,
  anchor=south west,
  rotate=-33
] at (axis cs:3.03,3.95)
  {$(p-2)\alpha=4$};

\draw[ours,thin]
  (axis cs:2.12,0.65)
  -- (axis cs:3.15,1.2)
  node[
    region label,
    font=\tiny,
    anchor=west,
    text=ours!80!black
  ] {Our uniqueness\\region};

\draw[tanaka!80!black,thin]
  (axis cs:2.07,0.08)
  -- (axis cs:2.85,0.30)
  node[
    region label,
    font=\tiny,
    anchor=west,
    text=tanaka!60!black
  ] {Tanaka's uniqueness region};

\end{axis}
\end{tikzpicture}
\end{minipage}
\hfill
\begin{minipage}[t]{0.49\textwidth}
\vspace{0pt}
\centering
\begin{tikzpicture}
\begin{axis}[
  regionaxis,
  width=\linewidth,
  height=6.7cm,
  xmax=\pmaxTwo,
  ymax=\alphaMaxTwo,
  tick label style={font=\scriptsize},
  label style={font=\small},
  title={ \(N\) Dimension
         (\(N=\DimN\))},
  title style={font=\small,align=center}
]

\addplot[
  draw=none,
  fill=mult,
  fill opacity=0.20,
  domain=\pentryTwo:\pcritical,
  samples=200,
  forget plot
] {4/(x-2)-(\DimN-2)}
  -- (axis cs:\pcritical,\alphaMaxTwo)
  -- cycle;

%
%
%
\addplot[
  draw=none,
  fill=supermult,
  fill opacity=0.24,
  domain=6:\alphaMaxTwo,
  samples=240,
  forget plot
] (
  {max(\pcritical,2+16/(x+\DimN-8))},
  {x}
)
  -- (axis cs:\pAtTop,\alphaMaxTwo)
  -- (axis cs:\pCross,6)
  -- cycle;






\addplot[
  mult,thick,
  domain=\pentryTwo:\pcritical,
  samples=200,
  forget plot
] {4/(x-2)-(\DimN-2)};

\addplot[
  supermult,very thick,
  domain=\pcritical:\pCross,
  samples=220,
  forget plot
] {8-\DimN+16/(x-2)};

\addplot[
  black!65,dashed,thick,
  domain=\pcritical:\pmaxTwo,
  samples=2,
  forget plot
] {(\DimN-2)*(x-2)/2-2};

\addplot[
  black!65,dashed,thick,
  forget plot
] coordinates {
  (\pcritical,0)
  (\pcritical,\alphaMaxTwo)
};

\addplot[
  only marks,
  mark=o,
  mark size=2pt,
  mark options={draw=black,fill=none},
  forget plot
] coordinates {(\pCross,6)};

\node[
  region label,
  font=\tiny,
  text=mult!80!black
] at (axis cs:\subRegionP+0.2,\subRegionA)
  {Subcritical\\multiplicity\\region};

\node[
  region label,
  font=\tiny,
  text=supermult!80!black
] at (axis cs:\superRegionP,\superRegionA)
  {Supercritical\\multiplicity region};



  
\draw[ours,thin]
  (axis cs:3,2)
  -- (axis cs:\subLabelP+0.5,\subLabelA)
  node[
    region label,
    font=\tiny,
    anchor=west,
    text=ours!80!black
  ] {$(p-2)(\alpha+N-2)=4$};

\node[
  curve label,
  anchor=south,
  text=supermult!80!black,
  yshift=3pt
] at (axis cs:\superLabelP,\superLabelA)
  {$(p-2)(\alpha+N-8)=16$};

\node[
  curve label,
  anchor=north west,
  yshift=10pt
] at (axis cs:\weightedLabelP+0.5,\weightedLabelA)
  {$p=p_\alpha$};

\node[
  curve label,
  anchor=north east
] at (axis cs:\pcritical+.5,\alphaMaxTwo-3)
  {$p=2^*$};

\node[
  curve label,
  anchor=north east,
  yshift=-4pt
] at (axis cs:\pCross+1.5,6)
  {$\left(2+\frac{16}{N-2},\,6\right)$};

\end{axis}
\end{tikzpicture}
\end{minipage}

\caption{Multiplicity and uniqueness regions for H\'enon equation.}
\end{figure}

\begin{theorem}[Uniqueness]\label{mainresultuniqueness}
Set $\nu=\frac{1}{\alpha+2},c_p=(p-2)(p-1)^{-\frac{p-1}{p-2}}$, $j_{\nu,1}$ is the first positive zero of the Bessel function $J_\nu$. Let $D_\alpha=(\alpha+2)2^{-\nu}J_{-\nu+1}\left(j_{-\nu,1}\right)\left(j_{-\nu,1}\right)^{1+\nu}\Gamma\left(1-\nu\right)$, then
H\'enon equation \eqref{Henon} has a unique solution when 
\begin{align*}
 (p-1)\left(\frac{j_{-\nu,1}}{j_{\nu,1}}\right)^2+c_pD_\alpha\le 1,N=1. 
\end{align*}
\end{theorem}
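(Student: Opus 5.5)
The plan is to reduce uniqueness to a symmetry statement, and then bound $\|u\|_\infty$ using weighted eigenvalue comparisons.

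\textbf{Reduction to evenness.} For $N=1$ the even positive solution is unique by the Ni--Nussbaum result cited in the introduction. It therefore suffices to show that, under the stated condition, every positive solution $u$ of \eqref{Henon} on $(-1,1)$ is even. Suppose it is not. Then $\tilde u(x)=u(-x)$ is also a solution, and $w=u-\tilde u$ is a nontrivial odd function with $w(0)=w(\pm1)=0$. It solves
\[
-w''=|x|^\alpha c(x)\,w,\qquad c=\frac{u^{p-1}-\tilde u^{p-1}}{u-\tilde u}\le (p-1)M^{p-2},\quad M=\|u\|_\infty .
\]

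\textbf{Weighted eigenvalues via Bessel functions.} I will use the weighted Dirichlet problem $-\varphi''=\mu|x|^\alpha\varphi$ on $(-1,1)$. The substitution $\varphi(x)=\sqrt{x}\,Z\big(2\nu\sqrt{\mu}\,x^{1/(2\nu)}\big)$, with $\nu=\frac1{\alpha+2}$, turns it into Bessel's equation of order $\nu$. The regular even branch is $J_{-\nu}$ and the odd branch (vanishing at $0$) is $J_\nu$. Hence
\[
\mu_1=\frac{j_{-\nu,1}^2}{4\nu^2},\qquad \mu_2=\frac{j_{\nu,1}^2}{4\nu^2},
\]
where $\mu_2$ is also the first weighted Dirichlet eigenvalue on $(0,1)$. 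Let $\psi>0$ be its eigenfunction on $(0,1)$.

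\textbf{Sturm argument excluding nonsymmetric solutions.} I first check that $w$ has no zero in $(0,1)$; otherwise compare with $\psi$ on a subinterval, where the weighted eigenvalue is even larger. So I may assume $w>0$ on $(0,1)$. Multiplying the equation for $w$ by $\psi$ and integrating twice by parts gives
\[
\mu_2\int_0^1 x^\alpha w\psi=\int_0^1 x^\alpha c\,w\psi .
\]
This is impossible if $\sup c<\mu_2$, and it suffices that
\[
(p-1)M^{p-2}<\mu_2 \qquad(\ast)
\]
(with the equality case of the hypothesis handled by strictness of $c<(p-1)M^{p-2}$ off a null set).

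\textbf{A priori bound on $M$.} The remaining and main step is to derive $(\ast)$ from the hypothesis. I would show
\[
(p-1)M^{p-2}\le (p-1)\mu_1+c_pD_\alpha\,\mu_2 ,
\]
so that dividing by $\mu_2$ and using $\mu_1/\mu_2=(j_{-\nu,1}/j_{\nu,1})^2$ gives exactly the stated condition. Two ingredients are needed.
\begin{itemize}
\item Test the equation against the positive even eigenfunction $\varphi_1$:
\[
\int|x|^\alpha\big(u^{p-1}-\mu_1 u\big)\varphi_1=0 .
\]
\item Use concavity ($u''\le0$): $u\ge M\,\ell(x)$, where $\ell$ is the tent function with peak $1$ at the maximum point $x_0$ and zeros at $\pm1$.
\end{itemize}
Pointwise, Young's inequality gives $t^{p-1}-\mu_1 t\ge -c_p\,\mu_1^{(p-1)/(p-2)}$-type lower bounds, and the constant $c_p=(p-2)(p-1)^{-\frac{p-1}{p-2}}$ is precisely the one produced when minimizing $t^{p-1}-\lambda t$ over $t$. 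Inserting $u\ge M\ell$ in the region where $u^{p-2}$ exceeds $\mu_1$ then yields an upper bound on $M^{p-2}$. Its remainder involves $\int|x|^\alpha\varphi_1$ and $\varphi_1(0)$, which is the origin of $\Gamma(1-\nu)$ and $2^{-\nu}(j_{-\nu,1})^{1+\nu}/(\dots)$; these are computed through $\int x^{\alpha}\sqrt{x}J_{-\nu}=$ (boundary term with $J_{-\nu+1}(j_{-\nu,1})$) and the small-argument asymptotics of $J_{-\nu}$.

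\textbf{Expected obstacle.} The difficulty is arranging this estimate so the constants match $D_\alpha$ exactly. The delicate point is controlling where the maximum $x_0$ sits relative to the weight $|x|^\alpha$, since $u$ need not be even a priori. I would first try the worst case $x_0$ at the endpoint of the admissible range, bounding the tent function below uniformly in $x_0$ by an even profile.
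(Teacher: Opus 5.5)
Your reduction to evenness, the Bessel identification of $\mu_1,\mu_2$, and the comparison of the odd part $w$ with $\mu_2$ (the first weighted Dirichlet eigenvalue on $(0,1)$) agree with the paper and are fine. So does your strictness remark: $c<(p-1)M^{p-2}$ wherever $w\neq0$. The gap is the a priori bound on $M$, which is the real content of the theorem.

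Your target $(p-1)M^{p-2}\le(p-1)\mu_1+c_pD_\alpha\mu_2$ cannot come from the two ingredients you list. $\mu_2$ appears neither in the tested identity $\int|x|^\alpha(u^{p-1}-\mu_1u)\varphi_1=0$ nor in the concavity bound, so you cannot expect to prove it without invoking the hypothesis. Inside the hypothesis region it is true, but only as a consequence of the multiplicative bound below. Indeed, with $a=(p-1)\mu_1$, $d=c_pD_\alpha\in(0,1)$ and $m=\mu_2$, the inequality $a/(1-d)\le a+dm$ is equivalent to $a\le m(1-d)$, which is the hypothesis.

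Moreover, the mechanism you sketch does not produce $D_\alpha$. Young's inequality with $\lambda=\mu_1$ applied to $u$, followed by $u\ge M\ell$ on $\{u^{p-2}>\mu_1\}$, gives an implicit inequality mixing $M^{p-1}$ and $M$ over a set that itself depends on $M$. That bounds $M$, but not in a form that yields the stated constant, as you yourself suspect.

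The missing idea is to minimize $t^{p-1}-\lambda t$ with $\lambda=M^{p-2}$ instead of $\lambda=\mu_1$. Equivalently, normalize $v=u/M\in(0,1]$ and use $v-v^{p-1}\le c_p=\max_{0\le t\le 1}(t-t^{p-1})$. The tested identity $M^{p-2}\int|x|^\alpha\varphi_1v^{p-1}=\mu_1\int|x|^\alpha\varphi_1v$ then gives
\[
M^{p-2}\Big(1-c_p\,\frac{\int_{-1}^1|x|^\alpha\varphi_1}{\int_{-1}^1|x|^\alpha\varphi_1 v}\Big)\le\mu_1 .
\]
Concavity is needed only in the denominator, through the even minorant $v\ge\frac{1-|x|}{2}$. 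This holds wherever the maximum sits, so it removes your concern about $x_0$.

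Take $\varphi_1(x)=\sqrt{|x|}\,J_{-\nu}(j_{-\nu,1}|x|^{(\alpha+2)/2})$. The resulting ratio $\int_{-1}^1|x|^\alpha\varphi_1\big/\int_{-1}^1|x|^\alpha\varphi_1\frac{1-|x|}{2}$ is computed by integrating $-\varphi_1''=\mu_1x^\alpha\varphi_1$ against $1$ and against $x$ on $(0,1)$, using $\varphi_1'(0)=\varphi_1(1)=0$. It equals $-2\varphi_1'(1)/\varphi_1(0)$. With $\varphi_1'(1)=-\frac{\alpha+2}{2}j_{-\nu,1}J_{1-\nu}(j_{-\nu,1})$ and $\varphi_1(0)=(j_{-\nu,1}/2)^{-\nu}/\Gamma(1-\nu)$, this is exactly $D_\alpha$.

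Hence $M^{p-2}(1-c_pD_\alpha)\le\mu_1$. The hypothesis forces $c_pD_\alpha<1$, so it is equivalent to $(p-1)\mu_1/(1-c_pD_\alpha)\le\mu_2$. This gives $(p-1)M^{p-2}\le\mu_2$, and your strictness argument finishes the proof.
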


The paper is organized as follows: in section \ref{gs}, we list all the notations needed and calculate the morse index of the ground state solution in the subcritical case, we also give a new proof of the existence of ground sate slution in the supercritical case; in section \ref{radial}, we calculate the morse index of the radial solution and prove theorem \ref{mainresultmultiplicity}; in section \ref{uniqueness} we prove theorem \ref{mainresultuniqueness}.

\section{Morse index of the ground state soluton}\label{gs}
We introduce weighted Lebesgue space $L^p(B_1(0),|x|^\alpha dx)$ with norm \[\|u\|_{p,\alpha}=\left(\int_{B_1(0)}   |x|^{\alpha}u^pdx\right)^{\frac{1}{p}}.\] 
\subsection{$N=2$ and subcritical case for $N\ge3$}\label{m(ugs)}
It is well known that $H_0^1(B_1(0))$ is compactly embedded into $L^p(B_1(0))$ for any $2<p<2^*$ when $N\ge2$. Because $L^p(B_1(0))$ is a subspace of $L^p(B_1(0),|x|^{\alpha})$, the compactly embedding still holds when we replace $L^p(B_1(0))$ by $L^p(B_1(0),|x|^{\alpha})$. In the following of this subsection, we focus on the subcritical case $2<p<2^*$. Let \[c(p,\alpha)=\min_{u\in H_0^1(B_1(0)),\|u\|_{p,\alpha}=1}\int_{B_1(0)} |\nabla u|^2dx.\]  
Suppose $c(p,\alpha)$ is obtained by $\tilde u\ge 0$, then $\tilde u$ satifies the following equation:
\begin{equation*}
\left\{\begin{aligned}
-\Delta \tilde u&=c(p,\alpha)|x|^{\alpha} \tilde u^{p-1},&&\text{in}\ B_1(0),\\
\tilde u&>0,&& \text{in}\ B_1(0),\\
\tilde u&=0,&& \text{on}\ \partial B_1(0).
\end{aligned}   \right. 
\end{equation*}

Next we calculate the Morse index of $\tilde u$ denoted by $m(\tilde u)$, which is the number of negative eigenvalues, counted with multiplicity of the following eigenvalue problem:
\begin{equation*}
-\Delta v-c(p,\alpha)(p-1)|x|^{\alpha} \tilde u^{p-2}v=\lambda v,v\in H_0^1(B_1(0)).
\end{equation*}

Using Rayleigh quotient, we obtain
\begin{align}\label{lambda1}
   \lambda_1&=\min_{v\in H^1_0}\frac{\int_{B_1(0)}|\nabla v|^2-c(p,\alpha)(p-1)|x|^\alpha\tilde u^{p-2}v^2dx}{\int_{B_1(0)}v^2dx} \notag \\
   &\le\frac{\int_{B_1(0)}|\nabla \tilde u|^2-c(p,\alpha)(p-1)|x|^\alpha\tilde u^{p}dx}{\int_{B_1(0)}\tilde u^2dx}\\
   &=\frac{(2-p)c(p,\alpha)\int_{B_1(0)}|x|^\alpha\tilde u^{p}dx}{\int_{B_1(0)}\tilde u^2dx}<0 \notag.
\end{align}
So $m(\tilde u)\ge 1$. Then we study the eigenvalue $\lambda_2$. Follow the ideas in \cite{Lin-MM1994}, we define \[f(t)=\frac{\int_{B_1(0)} |\nabla (\tilde u+tv)|^2dx}{\left(\int_{B_1(0)}|x|^\alpha |\tilde u+tv|^p dx\right)^{2/p}}, v\in H_0^1(B_1(0)).\]
Because $\min_{t\in \mathbb{R}}f(t)=f(0),f\in C^2(\mathbb{R})$, then
\[0\le f''(0)=2\left[\int_{B_1(0)}|\nabla v|^2dx-c(p,\alpha)(p-1)\int_{B_1(0)}|x|^\alpha \tilde u^{p-2}v^2dx+c(p,\alpha)(p-2)\left(\int_{B_1(0)}|x|^\alpha \tilde u^{p-1}vdx\right)^2 \right].\]
Therefore, we have 
\[\lambda_2\ge \min_{v\perp |x|^\alpha \tilde u^{p-1}} \frac1{\int_{B_1(0)}v^2dx}\left(\int_{B_1(0)}|\nabla v|^2dx-c(p,\alpha)(p-1)\int_{B_1(0)}|x|^\alpha \tilde u^{p-2}v^2dx\right)\ge 0.\]
So the quadratic form has at most one negative
direction, which means $m(\tilde u)\le1$. Therefore $m(\tilde u)=1$. 
Let $u_{gs}=c(p,\alpha)^\frac{1}{p-2}\tilde u$, then $u_{gs}$ is a solution of \eqref{Henon}. It's easy to see that $m(u_{gs})=m(\tilde u)=1$.

\subsection{Ground state solution of supercritical case for $N\ge 3$}\label{supercriticalgroundstate}

\subsubsection*{Existence space}
Let $G:=O\left(m\right) \times O\left(n\right)$, where  $O\left(m\right),O\left(n\right)$ is the orthogonal group in  $\mathbb{R}^{m}$, $\mathbb{R}^{n}$ respectively, and $N=m+n$. Consider $H_{0, G}^1:=\left\{u \in H_0^1(B_1(0)): g u=u,\forall g \in G\right\}$, where $gu(x)=u(g^{-1}x)$. Then we can define the existence space as in \cite{Cowan-Moameni-MA2024}:
\begin{equation*}
\begin{aligned}
&K_{+}=\left\{0 \leq u \in H_{0, G}^1\left(B_1(0)\right): u
\text { is even in } \theta \operatorname{across} \theta=\frac{\pi}{4} \text { with } u_\theta \geq 0 \text { for } 0<r<1 \text { and } 0<\theta<\frac{\pi}{4}\right\} ,
\end{aligned}
\end{equation*}
where $\tilde u(s,t)=u(x)=\hat{u}(r,\theta), s=|(x_1,x_2,\cdots,x_m)|,t=|(x_{m+1},x_{m+2},\cdots,x_N)|, r=\sqrt{s^2+t^2},\theta=\arctan \frac{t}{s}$. Then \begin{align*}
 \Delta u=\sum_{i=1}^{N}u_{x_ix_i}=&\tilde u_{ss}+\frac{m-1}{s}\tilde u_s+\tilde u_{tt}+\frac{n-1}{t}\tilde u_t\\
 =&\hat u_{rr}+\frac{N-1}{r}\hat u_r+\frac{\Delta_{\mathbb{S}^{N-1}}\hat u}{r^2} \\ 
 =&\hat u_{rr}+\frac{N-1}{r}\hat u_r+\frac{\hat u_{\theta\theta}+[(n-1)\cot \theta-(m-1)\tan \theta]\hat u_\theta}{r^2}.
\end{align*}

Let $Y_4(\theta)=-\cos4\theta+\frac{2-N}{2+N}, \theta\in (0,\frac{\pi}{2})$, then when $m=n$, $-\Delta_{\mathbb{S}^{N-1}}Y_4=4(N+2)Y_4$, which  means $Y_4$ is a spherical harmonic function of degree $4$, see \cite{Cowan-Moameni-MA2024}. 
The following lemma is very useful to prove existence.
\begin{lemma}
    $K_+$ is compactly embedded in $L^p(B_1(0),|x|^{\alpha})$ when $\alpha>0, 2<p<p_\alpha$.
\end{lemma}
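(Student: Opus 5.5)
We prove the compact embedding by splitting $B_1(0)$ into a small ball $B_\delta(0)$ around the origin and its complement. On the complement we use the $G$-symmetry to lower the effective dimension. Near the origin the weight $|x|^\alpha$ makes the contribution small, and we also use that elements of $K_+$ are monotone in $\theta$.

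\textbf{Step 1: reduction to two variables.} A function $u\in H^1_{0,G}$ depends only on $(s,t)$. Its norms become weighted integrals over the quarter disc $\{s,t\ge0,\ s^2+t^2<1\}$ with measure $s^{m-1}t^{n-1}\,ds\,dt$. Here $m=n=N/2$, since $N$ is even in the setting where $K_+$ is used, and $Y_4$ requires $m=n$.

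\textbf{Step 2: the region away from the origin.} On $\{\delta<r<1\}$, write $u=\hat u(r,\theta)$. The space is built for the sector $0<\theta<\pi/4$, where $u$ is increasing in $\theta$ and even across $\theta=\pi/4$. By this monotonicity, for each fixed $r$ the maximum of $\hat u(r,\cdot)$ is attained at $\theta=\pi/4$, and that maximum is controlled by averages over the set $\theta\in(\pi/8,\pi/4)$. On $\{r>\delta\}\cap\{\pi/8<\theta<3\pi/8\}$ we have $s,t\ge c\delta$. This region is effectively two-dimensional: a strip in $(r,\theta)$ with bounded, nondegenerate weights. Hence the $H^1$ bound gives, by one-dimensional Sobolev estimates in each of $r$ and $\theta$, control of $u$ in every $L^q$, with compactness by Rellich. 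Combining this with the pointwise bound $\hat u(r,\theta)\le \hat u(r,\pi/4)$ gives uniform bounds and equi-integrability of $|u|^p$ on $\{r>\delta\}$ for every finite $p$. More concretely, the plan is to show
\[
\sup_{\theta}\hat u(r,\theta)^2\le C_\delta\int_{\pi/8}^{\pi/4}\big(\hat u^2+\hat u_\theta^2\big)\,d\theta,
\]
and then apply a one-dimensional Sobolev inequality in $r$ to get $L^\infty$-in-$\theta$, $L^q$-in-$r$ bounds. Strong convergence in $L^p(\{r>\delta\})$ of a bounded sequence in $K_+$ follows from a.e. convergence (local Rellich) plus Vitali's theorem.

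\textbf{Step 3: near the origin (the main obstacle).} Here we need
\[
\int_{B_\delta}|x|^\alpha u^p\,dx\le \varepsilon(\delta)\,\|\nabla u\|_2^p,\qquad \varepsilon(\delta)\to0,
\]
uniformly on bounded subsets of $K_+$, and only for $p<p_\alpha$. The natural tool is a scaling argument. Apply the estimate of Step 2 on each dyadic annulus $A_k=\{2^{-k-1}<r<2^{-k}\}$ with constants obtained by rescaling from $A_0$: the annulus estimate gives $\|u\|_{L^p(A_k)}$ in terms of $\|\nabla u\|_{L^2(A_k)}$, and rescaling produces the factor $2^{-k(\alpha+N)/p}\cdot 2^{k(N-2)/2}$. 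This factor is summable precisely when $p<\frac{2(N+\alpha)}{N-2}=p_\alpha$, which is where the hypothesis enters. To sum over $k$ one uses $\sum_k\|\nabla u\|_{L^2(A_k)}^p\le\|\nabla u\|_2^p$, valid since $p>2$. Making the annulus estimate genuinely scale-invariant and valid for all $p<\infty$, relying on the monotonicity in $\theta$, is the delicate point, and the argument in \cite{Cowan-Moameni-MA2024} can be followed there.

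\textbf{Step 4: conclusion.} A bounded sequence in $K_+$ has a subsequence converging weakly in $H^1_0$. By Step 2 it converges strongly in $L^p(\{r>\delta\},|x|^\alpha)$, and by Step 3 the tail over $B_\delta$ is uniformly small. Letting $\delta\to0$ gives convergence in $L^p(B_1(0),|x|^\alpha)$. Finally, $K_+$ is weakly closed, because positivity, evenness and monotonicity are preserved under a.e. limits.
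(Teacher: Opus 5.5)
Your strategy is the paper's. You use evenness and $\theta$-monotonicity to reduce to the sector $\{\pi/8\le\theta\le\pi/4\}$, where $s,t\sim r$ and the problem is two-dimensional. You then prove an $L^p$ bound on a model annulus, rescale to dyadic annuli, and sum the factor $2^{-k(\alpha+N-\frac p2(N-2))}$ multiplying the $p$-th powers, which is summable exactly when $p<p_\alpha$.

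The step that fails as written is the annulus estimate in Step 3. For $k\ge1$ the rescaled function has no boundary condition on the model annulus, so $\|u\|_{L^p(A_k)}\lesssim\|\nabla u\|_{L^2(A_k)}$ is false even in $K_+$. Take a radial $u=\phi(|x|)\in H^1_0$ with $\phi\equiv1$ on $[2^{-k-1},2^{-k}]$. It belongs to $K_+$ since $u_\theta\equiv0$, it has $\nabla u=0$ on $A_k$, and $u\neq0$ there. The model-annulus bound from your Step 2 involves the full norm, $\|u\|_{L^p}\le C(\|\nabla u\|_{L^2}+\|u\|_{L^2})$. Under rescaling the zero-order term becomes $2^{2k}\int_{A_k}u^2\,dx\le\int_{A_k}u^2|x|^{-2}\,dx$. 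What you actually get is
\[
\left(\int_{A_k}|x|^\alpha u^p\,dx\right)^{1/p}\le C\,2^{-k\left(\frac{\alpha+N}{p}-\frac{N-2}{2}\right)}\left(\int_{A_k}\left(|\nabla u|^2+\frac{u^2}{|x|^2}\right)dx\right)^{1/2}.
\]
The sum over $k$ is then closed by Hardy's inequality $\int_{B_1(0)}u^2|x|^{-2}\,dx\le\frac{4}{(N-2)^2}\int_{B_1(0)}|\nabla u|^2\,dx$. This is exactly how the paper finishes, and it is where $N\ge3$ is used. It is all your ``delicate point'' amounts to, so no appeal to Cowan--Moameni is needed. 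With it, your Step 3 is correct, including $\varepsilon(\delta)\to0$ from the tail of the geometric series.

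Otherwise the only difference is how compactness is extracted. The paper proves the continuous bound $\|u\|_{p,\alpha}\le C\|\nabla u\|_2$ on $K_+$ for every $p<p_\alpha$. It then interpolates between Rellich compactness in $L^2$ and boundedness in $L^q(B_1(0),|x|^\alpha)$ for some $q\in(p,p_\alpha)$. You instead split off $B_\delta$ and use Vitali on $\{r>\delta\}$ together with uniform smallness of the tail. Both work: yours is more explicit, the paper's is shorter.

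Two small remarks. In Step 2, simply invoke the two-dimensional Sobolev embedding in the $(s,t)$-plane on the sector, as the paper does. A one-dimensional Sobolev inequality in $r$ does not apply directly to $\max_\theta\hat u(r,\theta)=\hat u(r,\pi/4)$, which is only the trace of an $H^1$ function on a ray. Also, the restriction $m=n$ in Step 1 is never used.
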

When $N$ is even, this lemma was proved in \cite{Cowan-Moameni-MA2024}. However, their method can be easily generated to all $N\ge3$. Here we give a different proof using the idea from \cite{Badiale-Serra-ANS2004} which is inspired by \cite{Li-JDE1990,Lin-JDE1993}.
\begin{proof}
    Let $\Omega_j=\{x:\rho^{j+1}<|x|\le \rho^j\}, \rho\in(0,1)$, then $B_1(0)\backslash\{0\}=\bigcup_{j=0}^{\infty}\Omega_j$.  Define $A={\{x:\frac{\pi}{8}\le \theta\le \frac{\pi}{4} \}}$. We only need to prove \[\left(\int_A |x|^\alpha u^pdx\right)^\frac{1}{p}\le C \left(\int_{B_1(0)}|\nabla u|^2dx\right)^\frac{1}{2}\] because of monotonicity and evenness across $\theta=\frac{\pi}{4}$.
By the two-dimensional Sobolev embedding theorem, we have \[\int_{\Omega_0\cap A}\tilde u^p(s,t)dsdt\le C\left(\int_{\Omega_0\cap A}\tilde u_s^2+\tilde u_t^2+\tilde u^2dsdt\right)^{\frac p2},\forall p<\infty.\] Here abuse of the notation is used when we don't distinguish the domain in different kinds of axis.

Using change of variable, we obtain
\[\int_{\Omega_j\cap A}\tilde u^p(s,t)dsdt\le C\rho^{2j}\left(\int_{\Omega_j\cap A}\tilde u_s^2+\tilde u_t^2+ \frac{1}{\rho^{2j}}\tilde u^2 dsdt\right)^\frac{p}{2}\le C\rho^{2j}\left(\int_{\Omega_j\cap A}\tilde u_s^2+\tilde u_t^2+\frac{1}{s^2+t^2}
\tilde u^2dsdt\right)^{\frac{p}{2}},\forall p<\infty.\] Therefore, 
\begin{align*}
  &\int_{B_1(0)}|x|^\alpha u^pdx 
  =\sum_{j=0}^\infty\int_{\Omega_j}|x|^\alpha u^pdx 
  \le C\sum_{j=0}^\infty \int_{\Omega_j\cap A}|x|^\alpha u^pdx\\
  = &C\sum_{j=0}^\infty \int_{\Omega_j\cap A}|s^2+t^2|^\frac{\alpha}{2} \tilde u^ps^{m-1}t^{n-1}dsdt
  \le C\sum_{j=0}^\infty\rho^{j(\alpha+N-2)} \int_{\Omega_j\cap A}\tilde u^pdsdt\\
  \le &C\sum_{j=0}^\infty\rho^{j(\alpha+N)}\left(\int_{\Omega_j\cap A}\tilde u_s^2+\tilde u_t^2+\frac{1}{s^2+t^2}
\tilde u^2dsdt\right)^{\frac{p}{2}}\\
  \le &C\sum_{j=0}^\infty\rho^{j\left[\alpha+N-\frac{p}{2}(N-2)\right]} 
  \left(\int_{\Omega_j\cap A}\left(\tilde u_s^2+\tilde u_t^2+\frac{1}{s^2+t^2}
\tilde u^2\right)s^{m-1}t^{n-1}dsdt\right)^\frac{p}{2}\\
  \le &C\sum_{j=0}^\infty\rho^{j\left[\alpha+N-\frac{p}{2}(N-2)\right]} 
  \left(\int_{B_1(0)}|\nabla u|^2+\frac{u^2}{|x|^2}dx\right)^\frac{p}{2}.
\end{align*}
By hardy inequality, we obtain when $\alpha+N-\frac{p}{2}(N-2)>0$, which means $p<p_\alpha$, we can prove 
\[\int_{B_1(0)}|x|^\alpha u^pdx \le C \left(\int_{B_1(0)}|\nabla u|^2dx\right)^\frac{p}{2}.\] Then the compact embedding can be obtained using the standard interpolation method.
\end{proof}

Let \[\tilde c(p,\alpha)=\min_{u\in K_+,\|u\|_{p,\alpha}=1}\int_{B_1(0)} |\nabla u|^2dx.\]  
Suppose $\tilde c(p,\alpha)$ is obtained by $\Tilde u$. Define \[f(t)=\frac{\int_{B_1(0)} |\nabla ((1-t)\tilde u+tv)|^2dx}{\left(\int_{B_1(0)}|x|^\alpha ((1-t)\tilde u+tv)^p dx\right)^{2/p}}, t\in (0,1),v\in K_+,\]
then we have 
\begin{equation}\label{functional inequality}
 f'_+(0)=2\left[\int_{B_1(0)}\nabla\tilde u\nabla(v-\tilde u)dx-\tilde c(p,\alpha)\int_{B_1(0)}|x|^{\alpha}\tilde u ^{p-1}(v-\tilde u)dx\right]\ge 0.   
\end{equation}
When $N\ge 4$, and $N$ is even. Using pointwise invariance property in $K_+$, see \cite{Cowan-Moameni-MA2024} , we obtain if $-\Delta w=\tilde c(p,\alpha)|x|^{\alpha}\tilde u^{p-1}$ in the weak sense, then $w\in K_+$. As a result,
\begin{equation}\label{pointwiseinvariance}
\int_{B_1(0)}\nabla  w\nabla(v-\tilde u)dx=\int_{B_1(0)}\tilde c(p,\alpha)|x|^{\alpha}\tilde u^{p-1}(v-\tilde u)dx.    
\end{equation}
Let $v=w$, combine \eqref{functional inequality} and \eqref{pointwiseinvariance} together, we obtain
\[\int_{B_1(0)}\nabla  (\tilde u-w)\nabla(w-\tilde u)dx\ge 0,\tilde u,w\in K_+,\]
which means $w=\tilde u$ and 
\begin{equation*}
\left\{\begin{aligned}
-\Delta \tilde u&=\tilde c(p,\alpha)|x|^{\alpha} \tilde u^{p-1},&&\text{in}\ B_1(0),\\
\tilde u&>0,&& \text{in}\ B_1(0),\\
\tilde u&=0,&& \text{on}\ \partial B_1(0),
\end{aligned}   \right. 
\end{equation*}
Let $u_{K_+}=\tilde c(p,\alpha)^\frac{1}{p-2}\tilde u$, then $u_{K_+}$ is a solution of H\'enon equation \eqref{Henon}, and we can prove $u_{K_+}$ is equivalent with the $K_+$-ground state solution $u_{MP}$ obtained by mountain pass method in \cite{Cowan-Moameni-MA2024} in the following sense.
\begin{lemma}\label{equivalencegroundstate}
Let \[I(u)=\frac{1}{2}\int_{B_1(0)}|\nabla u|^2dx-\frac{1}{p}\int_{B_1(0)}|x|^\alpha |u|^pdx, S(u)=\frac{\int_{B_1(0)}|\nabla u|^2dx}{\left(\int_{B_1(0)}|x|^\alpha |u|^pdx\right)^{\frac 2p}}.\]
Then $I(u_{K_+})=I(u_{MP}), S(u_{K_+})=S(u_{MP})$.  
\end{lemma}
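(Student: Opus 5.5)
The plan is to express both the action $I$ and the quotient $S$ along the fibres $\{su:s>0\}$, where for solutions they become functions of one another, and then compare the two minimax values using the cone structure of $K_+$. Assume $u_{MP}$ is the $K_+$-ground state of \cite{Cowan-Moameni-MA2024}, that is, a solution of \eqref{Henon} lying in $K_+$ with
\[I(u_{MP})=c_{MP}:=\inf_{\gamma\in\Gamma}\max_{t\in[0,1]}I(\gamma(t)),\]
where $\Gamma$ consists of paths in $K_+$ from $0$ to a point where $I<0$.

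\textbf{Step 1 (both are Nehari-type solutions).} Any solution $u$ of \eqref{Henon} satisfies $\int|\nabla u|^2=\int|x|^\alpha u^p$; call this common value $A$. Then
\[I(u)=\Big(\tfrac12-\tfrac1p\Big)A,\qquad S(u)=A^{1-2/p}.\]
Hence $I(u)=\big(\tfrac12-\tfrac1p\big)S(u)^{p/(p-2)}$, which is strictly increasing in $S$. So it suffices to prove $S(u_{K_+})=S(u_{MP})$; the identity for $I$ then follows. Since $S$ is $0$-homogeneous,
\[S(u_{K_+})=S(\tilde u)=\tilde c(p,\alpha)=\inf_{K_+\setminus\{0\}}S.\]
Because $u_{MP}\in K_+$, this gives immediately $S(u_{K_+})\le S(u_{MP})$, and therefore $I(u_{K_+})\le I(u_{MP})$.

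\textbf{Step 2 (reverse inequality via fibering).} For $0\ne v\in K_+$, the map $s\mapsto I(sv)$ on $s>0$ has a unique maximum. A direct one-variable computation gives
\[\max_{s>0}I(sv)=\Big(\tfrac12-\tfrac1p\Big)S(v)^{p/(p-2)}.\]
Now use that $K_+$ is a cone: $sv\in K_+$ for $s\ge0$, since nonnegativity, $G$-invariance, evenness across $\theta=\pi/4$ and $u_\theta\ge0$ are all preserved by positive scaling. Take $v=\tilde u$ and the ray path $\gamma(t)=tT\tilde u$ with $T$ large enough that $I(T\tilde u)<0$; this is an admissible path in $K_+$. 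Then
\[c_{MP}\le\max_t I(\gamma(t))=\Big(\tfrac12-\tfrac1p\Big)\tilde c(p,\alpha)^{p/(p-2)}=I(u_{K_+}).\]
Combining with Step 1 gives $I(u_{MP})=I(u_{K_+})$, and by the monotone relation of Step 1 also $S(u_{MP})=S(u_{K_+})$.

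\textbf{Main obstacle.} The difficulty is pinning down exactly which minimax characterization is used for $u_{MP}$ in \cite{Cowan-Moameni-MA2024}. The argument needs two facts: that $u_{MP}$ is a genuine solution in $K_+$, so that the Nehari identity holds, and that the admissible class contains the rays in $K_+$. If instead $u_{MP}$ is defined as a minimizer of $I$ over solutions in $K_+$ (a "least energy" definition), then Step 2 is replaced by an easier argument. Since $u_{K_+}$ is itself a solution in $K_+$, we get $I(u_{MP})\le I(u_{K_+})$ directly, and Step 1 gives the reverse inequality. Either way, the key inputs are the cone property of $K_+$ and the relation $I=(\tfrac12-\tfrac1p)S^{p/(p-2)}$ on solutions.
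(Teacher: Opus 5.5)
Your proof is correct. It justifies the key inequality differently from the paper.

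The paper quotes as well known the full chain
\[
\inf_{\gamma\in\Gamma}\max_{t\in[0,1]}I(\gamma(t))=\inf_{v\in K_+}\max_{t>0}I(tv)=\inf_{u\in\mathcal N_{K_+}}I(u).
\]
It evaluates the middle term with the same fibering formula you derive, getting $\bigl(\tfrac12-\tfrac1p\bigr)\tilde c(p,\alpha)^{p/(p-2)}$. It then reads off $I(u_{K_+})=I(u_{MP})$ from $A(u_{K_+})=B(u_{K_+})$ and the fact that $u_{MP}$ sits at the mountain-pass level. The equality of $S$ follows from $A(u_{MP})=B(u_{MP})$.

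You use only the trivial half of that chain. The rays $t\mapsto tT\tilde u$ are admissible paths in the cone $K_+$, so $c_{MP}\le I(u_{K_+})$. The nontrivial half is that every admissible path must cross $\mathcal N_{K_+}$, which rests on the inequality $B\le CA^{p/2}$ on $K_+$ from the embedding lemma. You replace it by observing that $u_{MP}$ is itself a solution in $K_+$. Hence it satisfies the Nehari identity and $S(u_{MP})\ge\inf_{K_+\setminus\{0\}}S=\tilde c(p,\alpha)$.

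The paper's route identifies the mountain-pass level itself with $\bigl(\tfrac12-\tfrac1p\bigr)\tilde c(p,\alpha)^{p/(p-2)}$ and with the Nehari level. So its energy equality needs only $I(u_{MP})=c_{MP}$. Yours already needs the Nehari identity for $u_{MP}$ at that step. That identity is required anyway for the $S$ equality, and the paper uses it too, so nothing is lost and your sandwich is the more self-contained of the two. Your remark about a least-energy definition of $u_{MP}$ is also correct.
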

\begin{proof}
 It's well known that 
\[\inf_{\gamma\in \Gamma} \max _{t\in [0,1]} I(\gamma(t))=\inf_{v\in K_+}\max_{t>0}I(tv)=\inf_{u\in \mathcal{N}_{K_+}}I(u),\]
where \begin{align*}
\Gamma=\{\gamma\in C([0,1],K_+):\gamma(0)=0\neq \gamma(1),I(\gamma(1))\le 0\},\\
\mathcal{N}_{K_+}=\left\{u\in K_+:\int_{B_1(0)}|\nabla u|^2dx=\int_{B_1(0)}|x|^\alpha |u|^pdx\right\}.  
\end{align*}   

Let $A(v)=\int_{B_1(0)}|\nabla v|^2dx,B(v)=\int_{B_1(0)}|x|^\alpha |v|^pdx$, then
\begin{equation}\label{maxI}
\max_{t\in [0,1]}\frac12A(tv)-\frac{1}{p}B(tv)=\left.\frac12A(tv)-\frac{1}{p}B(tv)\right|_{t=\left(\frac{A(v)}{B(v)}\right)^{\frac{1}{p-2}}}=\left(\frac{1}{2}-\frac{1}{p}\right)\left(\frac{A(v)}{B(v)^{\frac2p}}\right)^{\frac{p}{p-2}}.    
\end{equation}
$\inf _{u\in K_+}\frac{A(u)}{B(u)^{\frac2p}}$ is obtained by all functions $u=c\tilde u, c>0$. When $u=u_{K_+},u_{MP}$,
\[\frac{A(u_{K_+})}{B(u_{K_+})}=\frac{\tilde c(p,\alpha)^{\frac2{p-2}}\tilde c(p,\alpha)}{\tilde c(p,\alpha)^{\frac p{p-2}}}=1=\frac{A(u_{MP})}{B(u_{MP})},\] as a consequence, \[I(u_{K_+})=\inf_{v\in K_+}\max_{t>0}I(tv)=I(u_{MP}),S(u_{MP})=S(u_{K_+}).\]
\end{proof}

Although we prove the existence of solutions in $K_+$ in even dimensions, we cannot calculate the Morse index $m(u_{K_+})$ as in the subcritical case. However, we can compare the energy of $u_{K_+}$ with $u_R$ to obtain a multiplicity result. This will be done in the next section when all the tools are ready.

\section{Multiplicity}\label{radial}

Let $H_R=H_{0,rad}^1(B_1(0))$ and $u_R$ denote the unique positive radial solution of \eqref{Henon}. We will calculate the Morse index $m(u_R)$ in this section, and then prove theorem \ref{mainresultmultiplicity}.
\subsection{Morse index of radial solution}
Linearization at $u_R$ gives
\begin{equation}\label{linearized}
-\Delta v-(p-1)|x|^{\alpha}  u_R^{p-2}v=\Lambda v,v\in H_0^1(B_1(0)).
\end{equation}
Expanding $v$ into spherical harmonics $v=\sum R_k(r)Y_k$ reduces the equation to 
\begin{equation}\label{linearizedradial}
\begin{cases}
 \mathcal{L}_k(R_k)=-(R_k''+\frac{N-1}{r}R_k')+\frac{\mu_k}{r^2}R_k-(p-1)r^{\alpha}  u_R^{p-2}R_k=\Lambda_{i,k} R_k, 0<r<1\\  R'_0(0)=R_0(1)=0,R_k(0)=R_k(1)=0,k\ge 1,R_k\in H^1((0,1),r^{N-1}dr),
\end{cases}
\end{equation}
where $\mu_k=k(k+N-2)$ satisfies $-\Delta_{\mathbb{S}^{N-1}}Y_k=\mu_kY_k, k\ge 1$, see \cite{Gladiali-Grossi-Neves-Adv2013}.

We notice that $\mathcal{L}_k$ is a self-adjoint operator in $H^1((0,1),r^{N-1}dr)$ with the corresponding boundary condition. So 
\[
\Lambda_{1,0}=\min_{z\in H^1((0,1),r^{N-1}dr)} \frac{\int_0^1\mathcal{L}_0(z)zr^{N-1}dr}{\int_0^1z^2r^{N-1}dr}
\le \frac{\int_0^1\mathcal{L}_0(u_R)u_Rr^{N-1}dr}{\int_0^1u_R^2r^{N-1}dr}
=\frac{(2-p)\int_0^1u_R^{p}r^{\alpha+N-1}dr}{\int_0^1u_R^2r^{N-1}dr} <0.  \] 

As for higher degree eigenvalues $\Lambda_{1,k},k\ge1$, we present the following result.
\begin{lemma}\label{Lambdaradial}
(i) $\Lambda_{1,k}<0, k\ge 1$ when $N\ge 2, (p-2)(\alpha+N-2k)\ge 4k$;\\
(ii) $\Lambda_{1,1}<0$ when $N\ge 3, 2^*<p<p_\alpha$.

\end{lemma}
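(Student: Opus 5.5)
The plan is to test the Rayleigh quotient of $\mathcal{L}_k$ in \eqref{linearizedradial} with $z=r^k u_R$, where $u=u_R$ satisfies $-(u''+\frac{N-1}{r}u')=r^\alpha u^{p-1}$. Since $\Lambda_{1,k}$ is the minimum of the quotient, it suffices to show
\[
Q_k:=\int_0^1\mathcal{L}_k(z)\,z\,r^{N-1}dr<0 .
\]
A direct computation gives
\[
\mathcal{L}_k(r^\beta u)=-(p-2)r^{\alpha+\beta}u^{p-1}+\bigl(\mu_k-\beta(\beta+N-2)\bigr)r^{\beta-2}u-2\beta r^{\beta-1}u'.
\]
The choice $\beta=k$ kills the middle term, because $\mu_k=k(k+N-2)$. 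Moreover $z(0)=z(1)=0$, and $z\in H^1((0,1),r^{N-1}dr)$ since $u$ is smooth and bounded. After integrating $2k\int r^{2k+N-2}uu'$ by parts, this yields
\[
Q_k=-(p-2)A+k(2k+N-2)C,\qquad A=\int_0^1 r^{\alpha+2k+N-1}u^p\,dr,\quad C=\int_0^1 r^{2k+N-3}u^2\,dr .
\]
So (i) reduces to the inequality $(p-2)A>k(2k+N-2)C$.

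Next I will produce identities linking $A$, $C$ and $E=\int_0^1 r^{2k+N-1}u'^2dr$. Multiplying the equation by $r^{2k}u\,r^{N-1}$ and integrating by parts gives
\[
A=E-k(2k+N-2)C .
\]
A Pohozaev-type identity, obtained by multiplying the equation by $r^{2k+N}u'$, gives a second relation. In it the weight $r^\alpha$ produces the factor $\frac{\alpha+2k+N}{p}A$, while $E$ appears with coefficient $\frac{N-2-2k}{2}$ plus the boundary term $\frac12u'(1)^2\ge0$. This is where $\alpha$ enters; the Hardy inequality $E\ge\frac{(2k+N-2)^2}{4}C$ alone only yields a condition independent of $\alpha$. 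I will eliminate $E$ between the two identities, keeping the boundary term with its favourable sign, and if needed use the Hardy inequality to control the leftover $C$-terms. The target is to obtain $(p-2)A\ge k(2k+N-2)C$ with strict inequality, exactly under $(p-2)(\alpha+N-2k)\ge4k$. Strictness should come from $u'(1)\neq0$ (Hopf lemma) or from the Hardy inequality being strict.

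For (ii) I will take $k=1$, so $z=ru$. It may be cleaner to use the test function $u'$ itself, which satisfies $\mathcal{L}_1(u')=-\alpha r^{\alpha-1}u^{p-1}$ after differentiating the equation. With the $z=ru$ approach, the same identities should give a condition of the form $(p-2)(\alpha+N-2)$ large. In the range $2^*<p<p_\alpha$ one has $\alpha>\frac{p}{2}(N-2)-N$, so $(p-2)(\alpha+N-2)>(p-2)^2\frac{N-2}{2}$ and $p-2>\frac{4}{N-2}$, which should close the inequality. If that route is not sharp enough, I will fall back on $v=u'$. Then $\int\mathcal{L}_1(u')u'r^{N-1}=-\alpha\int r^{\alpha+N-2}u^{p-1}u'>0$ has the wrong sign, so I would instead use a combination $u'+c\,r u$ and optimise in $c$.

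The main obstacle is the bookkeeping of the Pohozaev step: choosing the right multiplier weight and linear combination so that $E$ cancels and the resulting threshold matches exactly $(p-2)(\alpha+N-2k)\ge4k$. The boundary and sign terms must work out so that the inequality is strict even in the equality case of the hypothesis.
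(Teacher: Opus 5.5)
Your computations are correct as far as they go: $\mathcal{L}_k(r^ku)=-(p-2)r^{\alpha+k}u^{p-1}-2kr^{k-1}u'$, $Q_k=-(p-2)A+k(2k+N-2)C$, and $A=E-k(2k+N-2)C$. The gap is that your target inequality $(p-2)A>k(2k+N-2)C$ does \emph{not} follow from $(p-2)(\alpha+N-2k)\ge 4k$. It is false in part of that range, so no arrangement of Pohozaev and Hardy identities can deliver it; the test function $r^ku$ is the wrong one.

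For $N=2$, $k=1$, your two identities plus the classical Pohozaev identity $u'(1)^2=\frac{2(\alpha+2)}{p}\int_0^1 r^{\alpha+1}u^p\,dr$ give
\[
pQ_1=(\alpha+2)\int_0^1 r^{\alpha+1}u^p\,dr-(p^2-p+\alpha+4)\int_0^1 r^{\alpha+3}u^p\,dr .
\]
Take $(p-2)\alpha=4$ and $p\to\infty$, which lies inside the range of (i). The substitution $t=r^{(\alpha+2)/2}$ turns the radial problem into the planar radial Lane--Emden problem, whose solutions concentrate at the origin on a scale exponentially small in $p$. Hence the ratio of the second integral to the first is exponentially small, far below the threshold $\frac{\alpha+2}{p^2-p+\alpha+4}\sim 2p^{-2}$, and $Q_1>0$. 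The underlying problem is that $r^ku$ pays its gradient cost over the whole ball, while the negative potential $(p-1)r^\alpha u^{p-2}$ can concentrate. The identities also cannot close structurally: when $N=2k+2$ your weighted Pohozaev identity contains neither $E$ nor $C$, and Hardy yields only $A\ge 0$.

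The missing idea is to test with $z_k=r^ku^{p/2}$, which lives where the potential lives. Then
\[
\mathcal{L}_k(z_k)=-r^{k-1}u^{\frac p2-2}\Bigl[\bigl(\tfrac p2-1\bigr)r^{1-N}G(r)+\bigl(kp-(N+\alpha)(\tfrac p2-1)\bigr)uu'\Bigr],
\]
where $G(r)=r^{N-1}\bigl[\frac{p}{2}ru'^2+r^{\alpha+1}u^p+(N+\alpha)uu'\bigr]$ satisfies $G(0)=0$ and $G'=r^{N-1}u'^2\bigl(N+\alpha-\frac{p}{2}(N-2)\bigr)>0$ for $p<p_\alpha$. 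Since $uu'<0$, this gives $\mathcal{L}_k(z_k)<0$ \emph{pointwise} exactly when $kp\le (N+\alpha)(\frac p2-1)$, i.e.\ $(p-2)(\alpha+N-2k)\ge 4k$. The Pohozaev structure enters through the monotone quantity $G$, not through global integral identities.

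For (ii), your observation that $2^*<p<p_\alpha$ forces $(p-2)(\alpha+N-2)>(p-2)(N-2)>4$ is correct, so (ii) does follow from (i) with $k=1$. But it inherits the gap above, and $z=ru$ fails here too. For example, with $N=3$ and $p\uparrow p_\alpha$ the solution bubbles at the origin. Then $\int_0^1 r^2u^2\,dr$, carried by the Green-function tail, decays more slowly than $\int_0^1 r^{\alpha+4}u^p\,dr$.

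Your fallback also has a sign error. Differentiating $-u''-\frac{N-1}{r}u'=r^\alpha u^{p-1}$ gives $\mathcal{L}_1(u')=+\alpha r^{\alpha-1}u^{p-1}$, so $u'$ actually has the favourable sign. The real obstruction is $u'(1)\neq 0$. Moreover $u'+c\,ru$ equals $u'(1)$ at $r=1$ for every $c$, so it is never admissible.

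The paper instead cuts $u'$ off with the factor $r^2-1$. The function $z=(N-2)ru+(r^2-1)u'$ is positive, vanishes at $r=0$ and $r=1$, and satisfies
\[
\mathcal{L}_1(z)=r^{\alpha-1}u^{p-1}\bigl[-\alpha+\bigl(\alpha+4-(N-2)(p-2)\bigr)r^2\bigr]<0\quad\text{for }p>2^*.
\]
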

\begin{proof}
(i) Let $z_k(r)=r^ku_R^{\frac p2}$, then $z_k(0)=z_k(1)=0,z>0,(0<r<1)$. In addition,
\begin{align*}
 \mathcal{L}_k(z_k)&=-(z_k''+\frac{N-1}{r}z_k')+\frac{\mu_k}{r^2}z_k-(p-1)r^{\alpha}  u_R^{p-2}z_k\\
 &=-r^{k-1}u_R^{\frac{p}{2}-2}\left[\frac{p}{2}\left(\frac{p}{2}-1\right)r(u_R')^2+kpu_R'u_R+\left(\frac{p}{2}-1\right)r^{\alpha+1}u_R^p\right]\\
 &=-r^{k-1}u_R^{\frac{p}{2}-2}\left[\left(\frac{p}{2}-1\right)
\left(\frac{p}{2}r(u_R')^2+r^{\alpha+1}u_R^p+(N+\alpha)u_R'u_R\right)+\left(kp-(N+\alpha)\left(\frac p2-1\right)\right)u_R'u_R\right].
\end{align*}

Define $G(r)=r^{N-1}\left[\frac{p}{2}r(u_R')^2+r^{\alpha+1}u_R^p+(N+\alpha)u_R'u_R\right]$, then 
\[G(0)=0,G'(r)=r^{N-1}(u_R')^2\left[N+\alpha-\frac{p}{2}(N-2)\right]>0, \text{ when } p<p_\alpha.\]
Therefore, $G(r)>0$, combining with $u_R'<0$ implies 
$\mathcal{L}_k(z_k)<0$ as long as $kp-(N+\alpha)\left(\frac p2-1\right)\le 0$, that is \[(p-2)(\alpha+N-2k)\ge4k.\]
As a result, \[\Lambda_{1,k}\le\frac{\int_0^1\mathcal{L}_k(z_k)z_kr^{N-1}dr}{\int_0^1z_k^2r^{N-1}dr}<0.\]
$\int_0^1\mathcal{L}_k(z_k)z_kr^{N-1}dr$ is well defined because $u_R=O(1-r)$ near $r=1$.



(ii) Let $z(r)=(N-2)ru_R+(r^2-1)u_R'$, then $z(0)=z(1)=0,z>0,(0<r<1)$. \\
When $k=1,\mu_k=N-1$, so
\begin{align*}
 \mathcal{L}_1(z)&=-(z''+\frac{N-1}{r}z')+\frac{N-1}{r^2}z-(p-1)r^{\alpha}  u_R^{p-2}z\\
 &=r^{\alpha-1}u_r^{p-1}\left[-\alpha+(\alpha+4-(N-2)(p-2))r^2\right]
\end{align*}
Because 
$2^*<p<p_\alpha$, so $4<(N-2)(p-2)<2\alpha+4$. As a consequence, \[-\alpha+\left(\alpha+4-(N-2)(p-2)\right)r^2<0,\ \forall r\in(0,1).\] From Rayleigh quotient, we have
\[\Lambda_{1,1}\le\frac{\int_0^1\mathcal{L}(z)zr^{N-1}dr}{\int_0^1z^2r^{N-1}dr}<0\]
\end{proof}
\begin{remark}
By (ii), we immediately obtain $m(u_R)\ge N+1$ in the supercritical case. As long as we find some other solution with low morse index, like the ground state solution $u_{K_+}$, we can obtain the multiplicity. However, we don't know how to calculate $m(u_{K_+})$. We choose to compare the energy instead.
\end{remark}

\subsection{Proof of theorem \ref{mainresultmultiplicity}}
In this subsection we will prove multiplicity results by comparing the morse index in subcritical case and by comparing energy in supercritical case.
\begin{proof}
\begin{itemize}
    \item Subcritical case $2<p<2^*, N\ge 2,(p-2)(\alpha+N-2)\ge 4$
\end{itemize}
By lemma \eqref{Lambdaradial}, $\Lambda_{1,1}<0$, which implies $m(u_R)\ge N+1$. Meanwhile, in subsection \ref{m(ugs)}, we prove ground state solution of the H\'enon equation \eqref{Henon} has morse index $m(u_{gs})=1$. So $u_{gs}\neq u_R$, which means H\'enon equation has at least two solutions, the radial solution $u_R$ and the nonradial solution $u_{gs}$. 

\begin{itemize}
    \item Supercritical case $2^*<p<p_\alpha, N\ge 4, N\text{ is even },(p-2)(\alpha+N-8)\ge 16$, $N$ is even.
\end{itemize}
We argue by contradiction. We use notations in lemma \ref{equivalencegroundstate}. Assume $u_R=u_{K_+}$, then $\tilde u=\tilde c(p,\alpha)^{\frac{-1}{p-2}}u_R$ satisfies \[A(\tilde u)=\int_{B_1(0)}|\nabla \tilde u|^2dx=\tilde c(p,\alpha),B(\tilde u)=\int_{B_1(0)}|x|^{\alpha}u^{p}dx=1, -\Delta \tilde u=\tilde c(p,\alpha)|x|^\alpha\tilde u^{p-1}.\]
Let
\[h(s)=S(\tilde u+s\Phi)=\frac{A(\tilde u+s\Phi)}{B(\tilde u+s\Phi)^{\frac2p}},\Phi=z_4Y_4=r^4\left(\tilde c(p,\alpha)^{\frac{1}{p-2}}\tilde u\right)^{\frac{p}2}Y_4,s>0.\]
Then 
$\tilde u+s\Phi\in K_+$ when $s>0$ is small, because $\tilde u$ is radial, $Y_4$ is increasing in $(0,\frac\pi4)$, even across $\theta=\frac\pi4$ and $\left|\frac{\Phi}{\tilde u}\right|\le \tilde c(p,\alpha)^{\frac{p}{2(p-2)}}\tilde u(0)^{\frac{p}{2}-1}$. In addition,
\begin{align*}
h(0)=&\tilde c(p,\alpha),  h'_+(0)=2\left(\int_{B_1(0)}\nabla \tilde u\nabla \Phi-\tilde c(p,\alpha)|x|^\alpha \tilde u^{p-1} \Phi dx\right)=0,\\  
 h''_+(0)=&2\left[\int_{B_1(0)}|\nabla \Phi|^2dx-\tilde c(p,\alpha)(p-1)\int_{B_1(0)}|x|^\alpha \tilde u^{p-2}\Phi^2dx+\tilde c(p,\alpha)(p-2)\left(\int_{B_1(0)}|x|^\alpha \tilde u^{p-1}\Phi dx\right)^2 \right]\\
 =&2\left(\int_0^1 \left(-z_4''-\frac{N-1}{r}z_4'+\frac{\mu_4}{r^2}z_4-\tilde c(p,\alpha)(p-1)r^\alpha \tilde u^{p-2}z_4\right)z_4r^{N-1}dr \right)\left(\int_{\mathbb{S}^{N-1}}Y_4^2d\omega\right)\\
 =&2\left(\int_0^1 \mathcal{L}_4(z_4)z_4r^{N-1}dr\right)\left(\int_{\mathbb{S}^{N-1}}Y_4^2d\omega\right)<0.   
\end{align*}
In the above calculation, we used 
\begin{align*}
 \int_{B_1(0)}|x|^\alpha \tilde u^{p-1}\Phi dx=&\int_0^1z_4\tilde u^{p-1}r^{\alpha+N-1}dr\int_{\mathbb{S}^{N-1}}Y_4dw\\
 =&\int_0^1z_4\tilde u^{p-1}r^{\alpha+N-1}dr\int_0^{\frac{\pi}{2}}\cos^{m-1}\theta\sin ^{n-1}\theta\left(-\cos4\theta+\frac{2-N}{2+N}\right)d\theta=0,  
\end{align*}
when $m=n$. As a result, $h(s)<h(0)$ when $s>0$ is small, which means $
S(\tilde u+s\Phi)<S(\tilde u)=\tilde c(p,\alpha)$, a contradiction. Therefore, $u_R\neq u_{K_+}$, resulting to a multiplicity result. 
\end{proof}

\section{Uniqueness}\label{uniqueness}
In this section, we establish the one-dimensional uniqueness result. Suppose H\'enon equation has two solutions, one is the radial (even) solution $u_R$, the other is the nonradial (noneven) solution $u$. Define $w(x)=u(x)-u(-x),x\in(-1,1)$. Then $w(x)\not \equiv0$ satisfies
\begin{equation}
\left\{\begin{aligned}
-w''(x)&=|x|^{\alpha}\left( u^{p-1}(x)-u^{p-1}(-x)\right),x\in(-1,1),\\
w(0)&=w(1)=0
\end{aligned}   \right. 
\end{equation}
Let \[h(x)=\begin{cases}
 \frac{u^{p-1}(x)-u^{p-1}(-x)}{w(x)},&w(x)\neq 0 ,\\
0 ,&w(x)=0,
\end{cases}\]
then $h(x)\le (p-1)M^{p-2}$, where $M=\|u\|_\infty$.

We introduce the weighted eigenvalue problem for $\varphi\in H_0^1((0,1))$ as follows:
\begin{equation}\label{weightedeigenvaluehalf}
\left\{\begin{aligned}
- \varphi'' &=\lambda|x|^\alpha \varphi,\ x\in(0,1),\\
\varphi(0)&=\varphi(1)=0
\end{aligned}   \right. 
\end{equation}
as a result, 
\[
\lambda_1=\inf_{\varphi\in H_0^1((0,1))} \frac{\int_{0}^1|\varphi'|^2dx}{\int_{0}^1|x|^\alpha \varphi^2dx}  
\le \frac{\int_{0}^1|w'|^2dx}{\int_0^1|x|^\alpha w^2dx}
\le \max h(x)\le (p-1)M^{p-2}.\]

Next, we introduce the weighted eigenvalue problem  for even function $\psi\in H_0^1((-1,1))$ as follows:
\begin{equation}\label{weightedeigenvalue}
\left\{\begin{aligned}
- \psi'' &=\lambda|x|^\alpha \psi,x\in(-1,1),\\
\psi(-1)&=\psi(1)=0,
\end{aligned}   \right. 
\end{equation}

Let $\varphi(x)=x^{-\frac12}Z(s),s=cx^{\beta}$, where $c=\frac{2\sqrt{\lambda}}{\alpha+2}, \beta=\frac{\alpha+2}{2}$. Then 
$Z$ satisfies the following Bessel equation:
\[s^2Z''(s)+sZ'(s)+(s^2-\nu^2)Z(s)=0, \nu=\frac{1}{\alpha+2}.\]
So $\varphi_n(x)=\sqrt xJ_{\nu}\left(j_{\nu,n}x^\frac{\alpha+2}{2}\right)$, and the corresponding eigenvalue is $\lambda_{n}=\left(\frac{\alpha+2}{2}\right)^2j^2_{\nu,n}$, where $j_{\nu,n}$ denote the nth positive zero of the Bessel function $J_{\nu}$. Similarly, 
$\psi_n(x)=\sqrt {|x|}J_{-\nu}\left(j_{-\nu,n}|x|^\frac{\alpha+2}{2}\right)$, and the corresponding eigenvalue is $\lambda_{n}^{even}=\left(\frac{\alpha+2}{2}\right)^2j^2_{-\nu,n}$. 

Let $v=\frac{u}{M}$, then $v$ is the solution of \begin{equation}\label{v}
\left\{\begin{aligned}
- v''&=M^{p-2}|x|^{\alpha} v^{p-1},v>0,\text{ in } (0,1),\\
v(-1)&=v(1)=0
\end{aligned}   \right. 
\end{equation}

Combine \eqref{weightedeigenvalue} when $\lambda=\lambda_{1}^{even}$ and \eqref{v} together, we obtain
\[\lambda_{1}^{even}\int_{-1}^1|x|^\alpha\psi_{1} vdx=M^{p-2}\int_{-1}^1|x|^\alpha\psi_{1} v^{p-1}dx.\]
As a consequence, we get the following neccessary condition for H\'enon equation to have multiple solutions:
\begin{equation}\label{neccessary}
\lambda_1< (p-1)M^{p-2}=(p-1)\lambda_{1}^{even}\frac{\int_{-1}^1|x|^\alpha\psi_{1} vdx}{\int_{-1}^1|x|^\alpha\psi_{1} v^{p-1}dx}.    
\end{equation}
Let $C_\alpha=\frac{\int_{-1}^1|x|^\alpha\psi_{1} vdx}{\int_{-1}^1|x|^\alpha\psi_{1} v^{p-1}dx}$.
If \eqref{neccessary} doesn't hold, we get a uniqueness result. So the sufficient condition of uniqueness highly depends on the estimate of constant $C_\alpha$. 

By convacity of $v(x)$, we obtain $v(x)\ge \frac{1-|x|}{2}, |x|<1$. Let $A=\int_{-1}^1|x|^\alpha\psi_{1} vdx,B=\int_{-1}^1|x|^\alpha\psi_{1} v^{p-1}dx$. Then \[A-B=\int_{-1}^1|x|^\alpha\psi_{1} \left(v-v^{p-1}\right)dx\le c_p\int_{-1}^1|x|^\alpha\psi_{1}dx, c_p=(p-2)(p-1)^{-\frac{p-1}{p-2}}.\]

Therefore, 
\begin{align*}
    1-\frac{B}{A}\le &c_p\frac{\int_{-1}^1|x|^\alpha\psi_{1}dx}{\int_{-1}^1|x|^\alpha\psi_{1}vdx}
\le c_p\frac{\int_{-1}^1|x|^\alpha\psi_{1}dx}{\int_{-1}^1|x|^\alpha\psi_{1} \frac{1-|x|}{2}dx } 
=2c_p\frac{\int_{0}^1|x|^\alpha\psi_{1}dx}{\int_{0}^1x^\alpha\psi_{1}dx-\int_{0}^1x^{\alpha+1}\psi_{1}dx }\\
=&2c_p\frac{-\int_{0}^1\psi_{1}''dx}{\left(-\int_{0}^1\psi_{1}''dx+\int_{0}^1x\psi_{1}''dx\right)}
=2c_p\frac{-\psi_{1}'(1)}{\psi_{1}(0)}
=c_p\frac{(\alpha+2)J_{-\nu+1}\left(j_{-\nu,1}\right)j_{-\nu,1}}{\left(j_{-\nu,1}/2\right)^{-\nu}\frac1{\Gamma\left(1-\nu\right)}}\\
=&c_p(\alpha+2)2^{-\nu}J_{-\nu+1}\left(j_{-\nu,1}\right)\left(j_{-\nu,1}\right)^{1+\nu}\Gamma\left(1-\nu\right).
\end{align*}
Let $D_\alpha=(\alpha+2)2^{-\nu}J_{-\nu+1}\left(j_{-\nu,1}\right)\left(j_{-\nu,1}\right)^{1+\nu}\Gamma\left(1-\nu\right)$. Then \[C_\alpha=\frac{A}{B}\le \frac{1}{1-c_pD_\alpha}.\]
This implies that if we assume 
\[\lambda_{1}\ge(p-1)\lambda_{1}^{even}\frac{1}{1-c_pD_\alpha},\]
or 
\[(p-1)\frac{\lambda_{1}^{even}}{\lambda_{1}}+c_pD_\alpha\le 1,\]
to be more specific, if  
\begin{equation}\label{uniqueness1d}
(p-1)\left(\frac{j_{-\nu,1}}{j_{\nu,1}}\right)^2+(p-2)(p-1)^{-\frac{p-1}{p-2}}(\alpha+2)2^{-\nu}J_{-\nu+1}\left(j_{-\nu,1}\right)\left(j_{-\nu,1}\right)^{1+\nu}\Gamma\left(1-\nu\right)\le 1,    
\end{equation}
then the 1 dimensional H\'enon equation has a unique solution.

Next, we prove the Tanaka's uniqueness region is contained in our uniqueness region.
\begin{propostion}
Let $D=\{(p,\alpha):p>2,\alpha>0,(p-1)2^{p-1}[B(\alpha+1,p+1)]^{-1}\le \pi^2\}$, then for any $(p,\alpha) \in D$, $(p,\alpha)$ satisfies  \eqref{uniqueness1d}.  
\end{propostion}\label{comparationwithTanaka}
\begin{proof}
By $(p-1)2^{p-1}[B(\alpha+1,p+1)]^{-1}\le \pi^2$, we have $(p-1)2^{p-1}\le B(\alpha+1,p+1)\pi^2\le \frac{\pi^2}{p+1}$, therefore, when $(p,\alpha)\in D$, $p<2.62$.
In order to prove \eqref{uniqueness1d} holds when $(p,\alpha)\in D$, we only need to estimate $\lambda_{1},\lambda_{1}^{even},D_{\alpha}$.

\begin{align*}
\lambda_{1}^{even} &=\inf_{R_0\in H^1_0(-1,1) } \frac{\int_{-1}^1|R_0'|^2dr}{\int_{-1}^1 R_0^2r^{\alpha}dr}
\le\frac{\int_{-1}^1\left|(1-|x|)'\right|^2dr}{\int_{-1}^1 \left(1-|x|\right)^2r^{\alpha}dr}
=\frac{1}{B(\alpha+1,3)},\\
\lambda_{1}& =\inf_{R_0\in H^1_0(0,1) } \frac{\int_{0}^1|R_0'|^2dr}{\int_{0}^1 R_0^2r^{\alpha}dr}
\ge \inf_{R_0\in H^1_0(0,1) } \frac{\int_{0}^1|R_0'|^2dx}{\int_{0}^1 R_0^2dr}=\pi^2,\\
D_\alpha&= \frac{\int_{-1}^1|x|^\alpha\psi_{1}dx}{\int_{-1}^1|x|^\alpha\psi_{1} \frac{1-|x|}{2}dx }
\le \frac{\int_{-1}^1|x|^\alpha dx}{\int_{-1}^1|x|^\alpha(1-|x|) \frac{1-|x|}{2}dx }
=\frac{2}{(\alpha+1)B(\alpha+1,3)}.
\end{align*}
Therefore,
\begin{align*}
&(p-1)\frac{\lambda_{1}^{even}}{\lambda_{1}}+c_pD_\alpha\\
&\le   \frac{p-1}{\pi^2B(\alpha+1,3)}+ (p-2)(p-1)^{-\frac{p-1}{p-2}}\frac{2}{(\alpha+1)B(\alpha+1,3)}\\
&\le\frac{1}{B(\alpha+1,p+1)}\left(\frac{p-1}{\pi^2}+(p-2)(p-1)^{-\frac{p-1}{p-2}}\frac{2}{\alpha+1}\right)\\
&\le\frac{\pi^2}{(p-1)2^{p-1}}\left(\frac{p-1}{\pi^2}+2(p-2)(p-1)^{-\frac{p-1}{p-2}}\right)
\end{align*}
Let $g(p)=\frac{\pi^2}{(p-1)2^{p-1}}\left(\frac{p-1}{\pi^2}+2(p-2)(p-1)^{-\frac{p-1}{p-2}}\right), 2<p<2.62$, then 
$g(p)$ is increasing in $(2,2.62)$, which implies 
$g(p)<g(2.62)\approx 0.97 <1$.
As a consequence, when $(p,\alpha)\in D$, \eqref{uniqueness1d} holds.
\end{proof}

The estimates above provide explicit sufficient regions for symmetry breaking and one-dimensional uniqueness. They
do not locate the exact transition between uniqueness and multiplicity. Determining sharper boundaries, extending
the cone argument to odd dimensions, and obtaining uniform quantitative uniqueness estimates in higher dimensions
remain natural questions.

\section*{Acknowledgments}
This work was supported by the Fundamental Research Funds for the Central Universities (2025), Grant JUSRP202501053, and the “Taihu Light” Science and Technology Tackling Program (Basic Research) Project, Grant 20260126KYY0001. I wish to thank Liping Wang (East China Normal University) for introducing this problem in the seminar, Shulin Zhou (Peking University) for encouraging the study of Morse indices, and Yaoting Gui (Xiamen
University) for helpful discussions.


\bibliography{henonbib}
\end{document}